\numberwithin{equation}{section}  %It gives the eq. no in fraction format.
\DeclareMathAlphabet{\mathpzc}{OT1}{pzc}{m}{it}
\DeclareMathAlphabet{\mathcalligra}{T1}{calligra}{m}{n}
\begin{document}
\newtheorem{theorem}{\bf Theorem}[section]
\newtheorem{proposition}[theorem]{\bf Proposition}
\newtheorem{definition}{\bf Definition}[section]
\newtheorem{corollary}[theorem]{\bf Corollary}
\newtheorem{exam}[theorem]{\bf Example}
\newtheorem{remark}[theorem]{\bf Remark}
\newtheorem{lemma}[theorem]{\bf Lemma}
\newtheorem{assum}[theorem]{\bf Assumption}

%%%%%%%%%%%%%%%%%%%%%%%%%%%%%%%%%%%%%%%%%%%%%%
\newcommand{\be}{\begin{equation}}
\newcommand{\ee}{\end{equation}}
\newcommand{\beno}{\begin{equation*}}
\newcommand{\eeno}{\end{equation*}}
\newcommand{\ba}{\begin{align}}
\newcommand{\ea}{\end{align}}
\newcommand{\bano}{\begin{align*}}
\newcommand{\eano}{\end{align*}}
\newcommand{\bea}{\begin{eqnarray}}
\newcommand{\eea}{\end{eqnarray}}
\newcommand{\beano}{\begin{eqnarray*}}
\newcommand{\eeano}{\end{eqnarray*}}
\newcommand{\tnorm}[1]{|\!|\!| {#1} |\!|\!|}
\renewcommand{\thefootnote}{\arabic{footnote}}
\def \noin{\noindent}
%%%%%%%%%%%%%%%%%%%%%%%%%%% new commands %%%%%%%%%
%%%%%%%%%%%%%% mathbb  %%%%%%%%%%%%%%%%%%%%%%%%%%
\def \N{{\mathbb N}}
\def \R{{\mathbb R}}
\def \S{{\mathbb S}}
\def \V{{\mathbb V}}
\def \Z{{\mathbb Z}}

%%%%%%%%%%%%%%%%%%%nmathcal %%%%%%%%%%%%%%%%%%%%%%%%%
\def \Ac{{\mathcal A}}
\def \Bc{{\mathcal B}}
\def \Cc{{\mathcal C}}
\def \Dc{{\mathcal D}}
\def \Ec{{\mathcal E}}
\def \Fc{{\mathcal F}}
\def \Gc{{\mathcal G}}
\def \Ic{{\mathcal I}}
\def \Jc{{\mathcal J}}
\def \Lc{{\mathcal L}}
\def \Mc{{\mathcal M}}
\def \Nc{{\mathcal N}}
\def \Oc{{\mathcal O}}
\def \Pc{{\mathcal P}}
\def \Qc{{\mathcal Q}}
\def \Rc{{\mathcal R}}
\def \Tc{{\mathcal T}}
\def \Uc{{\mathcal U}}
\def \Wc{{\mathcal W}}
\def \Xc{{\mathcal X}}
\def \Yc{{\mathcal Y}}
\def \Zc{{\mathcal Z}}

%%%%%%%%%%%%%%%%%%%%%%% mathscr %%%%%%%%%%%%%%%%%%%%%%%%%
\def \Cs{\mathscr{C}}
\def \Ds{\mathscr{D}}
\def \Es{\mathscr{E}}
\def \Js{\mathscr{J}}
\def \Ms{\mathscr{M}}
\def \Os{\mathscr{O}}
\def \Qs{\mathscr{Q}}
\def \Rs{\mathscr{R}}
\def \Ss{\mathscr{S}}
\def \Ts{\mathscr{T}}
%%%%%%%%%%%%%%%%%%%%%%%% mathrm %%%%%%%%%%%%%%%%%%%%%%%%%
\def \B {\mathrm{BDF}}
\def \el {\mathrm{el}}
\def \re {\mathrm{re}}
\def \e {\mathrm{e}}
\def \div {\mathrm{div}}
\def \BE {\mathrm{BE}}
\def \SBE {\mathrm{SBE}}
\def \SCN {\mathrm{SCN}}
\def \Sr {\mathrm{S}}
\def \BDF {\mathrm{BDF}}
\def \esssup {\mathrm{ess~sup}}
\def \data {\mathrm{data}}
\def \dist {\mathrm{dist}}
\def \diam {\mathrm{diam}}
\def \supp {\mathrm{supp}}
\def \Rs   {\mathbf{R}_{{\mathrm es}}}
%%%%%%%%%%%%%%%%% mathpzc %%%%%%%%%%%%%%%%%%%%%%%%%%%
\def \k{\mathpzc{k}}
\def \Rp{\mathpzc{R}}
%\def \Jp{\mathpzc{J}}

%%%%%%%%%%%%%%%%%%%%%%%%%%%%%%%%%%%%%%%%

\def \LL{L^{\infty}(L^{2})}
\def \LH{L^{2}(H^{1})}

\def \esssup {\mathrm{ess~sup}}
\def \supp {\mathrm{supp}}

\def \Rb {\mathbf{R}}
\def \Jb {\mathbf{J}}
\def  \apos {\emph{a posteriori~}}
\def  \apri {\emph{a priori~}}
\newcommand{\myblue}{\textcolor[rgb]{0.00,0.00,0.63}}
%\newcommand{\green}{\textcolor[rgb]{0.00,0.25,0.00}}
%%%%%%%%%%%%%%%%%%%%%%

%%%%%%%%%%%%%%%%%%%%%%%%%%%%%%%%%%%%%%%%%%%%%%%%%%%%%%%%%%%%%%%%%%%%%%%%%%%%%%%%%%%%%%%%%
\title{A Priori  Error Bounds  for Parabolic Interface Problems with Measure Data}
\author{Jhuma Sen Gupta\thanks{Corresponding Author: Department of Mathematics, Bits Pilani Hyderabad, Hyderabad - 500078, India
  (\tt{jhumagupta08@gmail.com}).}.}
\date{}
\maketitle
\textbf{ Abstract.}{\small{ This article   studies  a  priori  error analysis  for linear parabolic interface problems with measure data in time in a bounded convex polygonal domain in $\mathbb{R}^2$.   We have used the standard  continuous fitted finite element discretization for the space. Due to the low regularity of the data of the problem, the solution possesses  very low regularity in the entire domain. A  priori error bound in the $L^2(L^2(\Omega))$-norm for the spatially discrete finite element approximations are derived under minimal regularity with the help of the $L^2$ projection operators and  the duality argument. The interfaces are assumed to be smooth for our purpose.}}\\

\textbf{Key words.} Parabolic interface problems, spatially  discrete finite element approximation,   a priori error analysis, measure data

%\vspace{.1in}

%\textbf{AMS subject classifications.} 65N15, 65N30

\section{Introduction}
%\textbf{ 1.  Abstract.}
The aim of this paper is to study  a priori error analysis of the spatially discrete  finite element approximation for the linear parabolic interface problems with measure data in time.
 %Interface problems arise in  a wide variety of applications in science and engineering,
%such as in material science (cf. Lady{\v{z}}enskaja \emph{et al.} \cite{ladyzensk}),
% fluid dynamics (cf. Reusken and Nguyen \cite{reusken09} and Tu and Peskin \cite{tu92}), electrodynamics (cf. Lumer and Weis \cite{ lumer01})  and so on.
  To begin with,  we first introduce the following parabolic interface problem.

Let $\Omega $ be a bounded convex polygonal domain in $\mathbb{R}^2$ with  Lipschitz boundary $\partial \Omega$, and  let
$\Omega_1$ be a subdomain of $\Omega$ with $C^2$ boundary $\partial \Omega_1:=\Gamma$.
The interface $\Gamma$ divides the domain   $\Omega$
 into two subdomains $\Omega_1$ and $\Omega_2:=\Omega \setminus \Omega_1$. Consider the linear parabolic interface problem of the  form
\be
\partial_t u(x,t) + \mathcal {L} u  = \mu \;\;\; \mbox{in}\;\;\; \Omega_T: = \Omega \times (0,T],\; T>0 \label{main:1}
\ee
with initial and boundary conditions
\be
u(x,0) = u_{0}(x) \;\;\;\mbox{in}\;\;\; \Omega;\;\;\;\;\;\;u=0 \;\;\;\mbox{on} \;\;\; \partial \Omega_T: =\partial\Omega\times(0,T] \label{main:2}
\ee
and jump conditions on the interface
\be
[u] = 0,\;\;\;\left[\beta \frac{\partial u}{\partial\textbf{n}}\right] = 0\;\;\;\mbox{across} \;\;\; \Gamma_T: = \Gamma\times[0,T],\label{main:3}
\ee
where the operator $\mathcal{L}$ is a second order linear elliptic operator defined by
\beno
\mathcal{L} (w) := - \div(\beta(x)\nabla w).
\eeno
The symbol $[v]$ denotes the jump of a quantity $v$ across the interface $\Gamma$,  i.e., $[v](x)$ = $v_{1}(x) - v_{2}(x)$, $x \in\Gamma$ with $v_{i}(x) = v(x)|_{\Omega_{i}}$, $i=1,2$ and $\partial_t =\frac{\partial }{\partial t}$.
 The diffusion coefficient $\beta(x)$ is assumed to be positive and piecewise constant on each subdomain,  i.e.,
$$\beta(x) = \beta_{i} \;\;\;\mbox{for}\;\;\;   x\in \Omega_{i}, \;i=1,2.$$
The symbol $\textbf{n}$ denotes the unit outward normal to the boundary $\partial\Omega_{1}$.

The initial function $u_0(x) \in L^2(\Omega)$ and $\mu = f \sigma$ with $f\in \mathcal{C}([0, T]; L^2(\Omega))$ and $\sigma\in \mathcal{M}[0, T]$, where $\mathcal{M}[0, T]$ denotes the space of the real and regular Borel measures in $[0, T]$. Indeed   $\mathcal{M}[0, T]$ is defined as the dual space of ${\mathcal{C}}[0, T]$ with the standard operator norm given by
\beno
\| \sigma\|_{\mathcal{M}[0, T]} = \sup \left\{\int_0^T g d\sigma \;:\; g\in \mathcal{C}[0, T], \; \sup_{ t \in [0, T]} |g(t)| \le 1\right\}.
\eeno

Further, the interface $\Gamma$ is assumed to be of arbitrary shape but is of class $C^2$ for our purpose.

The parabolic problems with measure data in time mainly appears in the study of  parabolic optimal control problems with pointwise state constraints (cf. \cite{casas1997, gong13,  meidner2011}) and references cited therein. Parabolic problems with measure data arise in wide variety of applications, such as in the modeling of transport equations for  effluent discharge in aquatic media \cite{araya2006}, in the design of disposal of sea outfalls discharging polluting effluent from a sewerage systems \cite{martinez2000}, and many more \cite{droniou2000, gong13, ramos2002}.

The existence and uniqueness of the solutions for both elliptic and parabolic problems with measure data has been investigated by Boccardo and Gallou\"{e}t in \cite{boccardo1989} , Casas in \cite{casas1997}. The finite element method for elliptic problems with measure data have been studied by Araya in \cite{araya2006},
 I. Babuska in \cite{babuska1971}, Casas in \cite{casas1985}, R. Scott in \cite{scott1973, Scott1976} and subsequently, for parabolic problems by Gong in \cite{gong13}.

Finite element method for parabolic interface problems have been extensively studied by many authors, e.g., Chen and Zou \cite {chen-zou98}, Sinha and Deka \cite{sinha05}, Huang and Zou \cite{huang2002}, for details we refer to \cite{bernardi00, bramble96, me2016} and references listed therein.
To the best of the author´s knowledge, finite element method for parabolic interface problems with measure data in time has not been explored yet.
Therefore,  an attempt has been made in this paper to derive a priori error bounds for parabolic interface problems with measure data in time.

For interface problems, it is known that because of the discontinuity of the coefficients along the interface $\Gamma$, the solution of the problem has low global regularity \cite{ladyzensk}. In addition, the solutions of the parabolic problems with measure data  in time also exhibit low regularities \cite{gong13}. Therefore, one follows the duality argument to derive an error bound which in turn depends on a priori error bound for some backward parabolic problem with source term in $L^2(\Omega)$. Therefore, one can use the $H^2$ regularity of the solution of the backward problem (see, [\cite{gong13}, Lemma 2.1]), which is in contrast  to the present case. Indeed we have used the global $H^1$ regularity of the solution to derive a priori bound (see, Lemma \ref{lem:apri:2}) for backward problem (\ref{back:1}). To achieve this, we have derived some new approximation results (see, Lemma \ref{lem: L2Ritz}) for the Ritz- projection operator $R_h$, defined in Section 3.

The structure of this paper is   as follows. In Section \ref{sec2}, we briefly introduce some notations  which will be used throughout, finite element discretizations of the domain and recall some interpolation results  from the literature. In addition, we also recall stability results for parabolic interface problems and we discuss wellposedness of parabolic interface problems with measure data. Section \ref{sec3} is devoted to discuss spatially discrete finite element approximation for parabolic interface problems with measure data and  some related auxiliary results. Finally, we present a priori error bound in the $L^2(L^2)$-norm for interface problems with measure data with some concluding remarks.

Through out this article we use the constant $c$ as the generic one.

%\section{Functional analysis notations and backgrounds.}
\section{Preliminaries} \label{sec2}
In this section, we introduce some standard function spaces including some embedding results, the finite element discretization of the domain $\Omega$.
 We also recall some  approximation properties of the   Lagrange  interpolation operator with low regularity conditions  from \cite{chen-zou98}. In addition, we recall the stability results for parabolic interface problems. And finally, we present the existence and uniqueness of weak solutions of interface problems with measure data in time.
\subsection{Function spaces} We shall use the standard notations for function spaces (see, e.g., \cite{adams, evans}).
Given a Lebesgue measurable set $\mathcal{N} \subset \mathbb{R}^{2}$ and $1\leq p\leq \infty$, $L^{p}(\mathcal{N})$ refers to the standard Lebesgue spaces with the norm $\|\cdot\|_{L^{p}(\mathcal{N})}$. In particular, $L^{2}(\mathcal{N})$ is a Hilbert space  with respect to the norm induced by the inner product $( v, w) = \int _{\mathcal{N}} v(x) w(x) dx $.  We denote the norm of $L^{2}(\mathcal{N})$ by $\|\cdot\|_{\mathcal{N}}$.  For an integer $m > 0$, $H^m(\mathcal{N})$ denotes the usual  Sobolev space with the standard norm $\|\cdot\|_{m,\mathcal{N}}$.
 Further, the function space $H_{0}^{1}(\mathcal{N})$ is a subspace of $H^1(\mathcal{N})$ whose elements have  vanishing trace on the boundary $\partial \mathcal{N}$. For simplicity of notation,  we will skip the subscript $\mathcal{N}$ whenever $\mathcal{N}= \Omega$. We denote $H^{-1} (\Omega)$ as the dual space of $H_0^1(\Omega)$ and the corresponding norm is denoted by $\|\cdot\|_{-1}$.

In addition,  $\mathcal{D}(\Omega_T)$ denotes the space  of $\mathcal{C}_{\infty}(\Omega_T)$ functions with compact support in $\Omega_T$. Moreover,  the $L^2$ - inner product on $L^2(\Omega_T)$, denoted by $ ( v, w )_{\Omega_T}$, defined by
\beno
(v, w)_{\Omega_T} = \int _{\Omega_T} v w dx dt \;\;\;\; \forall v, w \in L^2(\Omega_T).
\eeno

For a real Banach space $\mathbf{B}$ and for $1 \leq p < +\infty$, we define
\beno
L^p(0, T; \mathbf{B}) \;=\;\left\{v : (0,T)\rightarrow \mathbf{B} \;\;\left |\right. \;\; v \text{ is measurable and} \;\;\int_{0}^{T} \|v(t)\|_{\mathbf{B}}^{p} dt  <  + \infty \right\}
\eeno
equipped with the norm
\beno
\|v\|_{L^p(0, T; \mathbf{B})} \;:=\;\left( \int_{0}^{T} \|v(t)\|_{\mathbf{B}}^{p}\right)^{\frac{1}{p}},
\eeno
with the standard modification for $p= \infty$.

We shall also work on the following space:
$X = H_0^1(\Omega)\cap H^2(\Omega_1)\cap H^2(\Omega_2)$ equipped with the norm
\beno
\|v\|_{X} := \|v\|_{H^{1}_{0}(\Omega)} + \|v\|_{H^{2}(\Omega_{1})} + \|v\|_{H^{2}(\Omega_{2})}.
\eeno
 Further, we set
 \beno
 \mathcal{X}(0, T) := L^2(0, T; H_0^1(\Omega))\cap H^1(0, T; H^{-1}(\Omega))
 \eeno
 and
 \beno
 \mathcal{Y}(0, T) := L^2(0, T; X(\Omega))\cap H^1(0, T; L^2(\Omega)).
 \eeno
 It is well-known that $\mathcal{X}(0, T) \hookrightarrow \mathcal{C}([0, T]; L^2(\Omega))$ and $\mathcal{Y}(0, T) \hookrightarrow \mathcal{C}([0, T]; H_0^1(\Omega))$ (see, \cite{Lions, gong13}).

 Now,  we introduce the bilinear forms $a(\cdot, \cdot)$ corresponding to the linear operator $\mathcal{L}$ on $\Omega$ and $\Omega_T$ respectively, as follows:
 \beno
 a(v, w) = \int_{\Omega} \beta \nabla v \nabla w dx \;\;\; \forall v, w\in H_0^1 (\Omega),
 \eeno
 and
 \beno
 a_{T}{(v, w)} = \int_{\Omega_T} \beta \nabla v \nabla w dx dt \;\;\; \forall v, w\in L^2(0, T; H_0^1 (\Omega)).
 \eeno
 We assume that the bilinear form $a(\cdot, \cdot)$ is bounded and coercive on $H_0^1 (\Omega)$, i.e., $\exists  \,\alpha,\gamma > 0 $ such that
\be
|a(v,w)| \leq \alpha\; \|v\|_{1} \|w\|_{1}\;\;\forall v, w \in H_{0}^{1}(\Omega), \label{conts}
\ee
and
\be
a(v,v) \geq \;\gamma \|v\|^{2}_{1}\;\;\forall v \in H_{0}^{1}(\Omega). \label{coercive}
\ee

%%%%%%%%%%%%%%%%%%%%%%%%%%%%%%%%%%%%%%%%%%%%%%%%%%%%%%%%%%%%%%%%%%%%%%%%%%%%%%%%%%%%%%%%
\subsection{Finite Element Discretization of the domain $\Omega$} \label{sec:2.2}
\indent
%Let $\Pc :=\{ (t_{n-1}, t_n]\}_{n=1}^{N}$ be a partition of $(0,T]$ with
%  $I_n :=(t_{n-1}, t_n]$ and  $k:= t_n- t_{n-1} = \frac{T}{N}$ be the constant time step.

To define the finite element approximation we now describe the conforming shape regular
triangulation $\Tc_h = \{K\}$ of $\bar{\Omega}$. To  start with we approximate the domain $\Omega_{1}$ by a polygon $P_{\Omega_1}$ with boundary $\Gamma_{P}$ such that all the vertices of the polygon lie on the interface $\Gamma$. Thus, $\Gamma_P$ divides the domain $\Omega$ into two subsequent subdomains $P_{\Omega_1}$ and $P_{\Omega_2}$, where $P_{\Omega_2}$ is a polygon  approximating the domain $\Omega_2$. We now make the following assumptions on the triangulation $\Tc_h $ (cf. \cite{chen-zou98}):

\begin{description}
\item $\textbf{A1}$. If $K_{1}, K_{2} \in \mathcal{T}_{h}$ and $K_{1} \neq K_{2}$, then either $K_{1}\cap K_{2} = \emptyset$ or $K_{1}\cap K_{2}$ share a common edge or a common vertex. We also assume that each triangle is either in $P_{\Omega_1}$ or in $P_{\Omega_2}$ or intersects the interface $\Gamma$ in at most an edge.

\item $\textbf{A2}$.
Let $h := \max \{ h_K \mid h_K = \text{diam}(K), \; K \in \Tc_h \}$.
The family of conforming
shape regular triangulations $\Tc_h$ are assumed to be quasi-uniform,  i.e., there exist constants $C_{0}, C_{1} > 0 $ independent of $h$ such that
\beno
C_{0} r_{K} \leq h \leq C_{1}\bar{r}_{K} \;\;\;\;\;\;\; \forall K \in \mathcal{T}_{h},
\eeno
where  $r_{K}$ and $\bar{r}_{K}$, respectively be the diameters of the inscribed and circumscribed circles of a triangle $K$.

\end{description}

For a shape regular triangulation $\mathcal{T}_{h}$ of $\Omega$, we consider the following finite element space
\beno
\mathbb{V}_h := \left \{\chi \in H_{0}^{1}(\Omega) \;\;\left|\right. \;\chi|_{K} \in \mathbb{P}_{1}(K) \; \text{for all} \; K \in \mathcal{T}_{h} \right\},
\eeno
where $\mathbb{P}_1(K)$ is the space of polynomials of degree at most $1$ over $K$.

%%%%%%%%%%%%%%%%%%%%%%%%%%%%%%%%%%%%%%%%%%%%%%%%%%%%%%%%%%%%%%%%%%%%%%%%%%%%%%

%\emph{$L^{2}$-projection operator:} The $L^{2}$-projection operator is a map $\Pi_0^{n}: L^{2}(\Omega) \longrightarrow \S^{n}$ such that for $v \in L^{2}(\Omega)$ and $0 \le n \le N$,
% \be
% \langle \Pi_0^{n}\; v\;,\chi_{n}\rangle =\langle v,\;\chi_{n}\rangle \;\;\forall \chi_{n} \in \S^{n}.\label{L2po:bdf}
% \ee

 \subsection{Interpolation Estimates}
The a priori  error analysis for parabolic problems  uses  the approximation properties of the standard  Lagrange  interpolation operator. It is well known that for $O(h^2)$ approximation
results using the piecewise linear finite elements, one requires the global $H^2$ regularity of the function (\cite{ciarlet}). But due to the discontinuity of the coefficient along the interface $\Gamma$, the solution of the parabolic interface problem is only in $H^1(\Omega)$ globally. Hence,  the standard approximation properties do not apply directly for interface problems. Indeed the following
results are true, (cf. \cite{chen-zou98}). Note that,  the approximation results derived in \cite{chen-zou98} uses the global $H^1(\Omega)$ regularity and the estimates are nearly optimal order  up to $|\log h|$ factor.

\begin{lemma}\cite{chen-zou98}\label{lem:lag} Let $\Pi_{h} : \mathcal{C}(\bar{\Omega}) \longrightarrow V_h$ be the standard Lagrange interpolation operator \cite{ciarlet}. Then the following
interpolation estimates hold:
\beno
\| v - \Pi_{h} v\| + h \| \nabla(v - \Pi_{h} v)\| \le c h^2 |\log h|^{\frac 1 2 } \| v\|_X, \;\; \forall v \in X.\label{int -lag}
\eeno
\end{lemma}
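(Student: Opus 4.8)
The plan is to estimate the nodal interpolation error element by element, treating separately the elements that remain inside one subdomain and the \emph{interface elements} that meet both $\overline{\Omega_1}$ and $\overline{\Omega_2}$; the interface elements are then reduced to the classical situation by comparing $v$ with a single one-sided $H^2$ extension, and the logarithmic factor is extracted from the borderline two-dimensional embedding $H^1 \not\hookrightarrow L^\infty$. Concretely, split $\Tc_h$ into the elements $K\subset\overline{\Omega_1}$, the elements $K\subset\overline{\Omega_2}$, and the interface elements. On an element of the first two types $v|_K\in H^2(K)$, so the classical Bramble--Hilbert estimate (see \cite{ciarlet}) together with the shape regularity in $\textbf{A2}$ gives
\beno
\|v - \Pi_h v\|_K + h_K\|\nabla(v-\Pi_h v)\|_K \le c\, h_K^2\, \|v\|_{2,K},
\eeno
and since these elements are disjoint and lie in $\Omega_1$ or $\Omega_2$, squaring and summing yields a contribution $\le c\, h^2\|v\|_X$ with no logarithm.

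The interface elements carry the whole difficulty. By $\textbf{A1}$ such a $K$ has one edge $e$ on $\Gamma_P$ whose endpoints lie on $\Gamma$; since $\Gamma$ is $C^2$ and the polygon vertices sit on $\Gamma$, the curve $\Gamma$ departs from $e$ by at most $c\,h_K^2$, so $\Gamma$ divides $K$ into a \emph{majority part} $K^\sharp$ contained in one subdomain — say $\Omega_1$ — and a \emph{sliver} $K^\flat = K\setminus K^\sharp\subset\Omega_2$ with $|K^\flat|\le c\,h_K^3$, while the third vertex of $K$ lies in $\Omega_1$. Let $\tilde v_1\in H^2(\R^2)$ be a Sobolev extension of $v|_{\Omega_1}$ with $\|\tilde v_1\|_{2,\R^2}\le c\,\|v\|_{2,\Omega_1}$, and similarly $\tilde v_2$ for $v|_{\Omega_2}$. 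Because $v\in H_0^1(\Omega)$ has no jump across $\Gamma$, the three nodal values of $v$ on $K$ coincide with those of $\tilde v_1$ (the two on $\Gamma$ because $\tilde v_1=v=\tilde v_2$ there, the third because it lies in $\Omega_1$), hence $\Pi_h v|_K=\Pi_K\tilde v_1$, the local linear interpolant of $\tilde v_1$. Writing $v-\Pi_K\tilde v_1=(v-\tilde v_1)+(\tilde v_1-\Pi_K\tilde v_1)$, the second term is controlled on $K$ by the classical estimate, while the first vanishes on $K^\sharp$; so only $\|v-\tilde v_1\|_{K^\flat}$ and $\|\nabla(v-\tilde v_1)\|_{K^\flat}$ remain, with $v-\tilde v_1=\tilde v_2-\tilde v_1=:g\in H^2(\R^2)$ and $g=0$ on $\Gamma$.

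For the $L^2$ term, integrating $g$ from $\Gamma$ along the normal direction and using that $K^\flat$ lies within distance $c\,h_K^2$ of $\Gamma$ gives $\|g\|_{K^\flat}\le c\,h_K^2\,\|g\|_{1,K}$, still without a logarithm. The logarithm appears only in the gradient term: $\nabla g\in H^1(K)$, but $H^1$ does not embed in $L^\infty$ in two dimensions, so one instead uses $H^1(\R^2)\hookrightarrow L^p(\R^2)$ — whose norm grows like $\sqrt p$ — together with H\"older's inequality on the thin sliver,
\beno
\|\nabla g\|_{K^\flat}\le\|\nabla g\|_{L^p(K)}\,|K^\flat|^{\frac12-\frac1p}\le c\,\sqrt p\;\|g\|_{2,K}\;h_K^{3\left(\frac12-\frac1p\right)}.
\eeno
Multiplying by $h_K$, squaring, summing over the $O(h^{-1})$ interface elements (using finite overlap of the extension neighbourhoods so that $\sum_K\|g\|_{2,K}^2\le c\|v\|_X^2$), and finally choosing $p\sim|\log h|$ converts the power loss into precisely a factor $|\log h|$. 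Combining the three classes of elements and taking square roots gives $\|v-\Pi_h v\|+h\|\nabla(v-\Pi_h v)\|\le c\,h^2\,|\log h|^{\frac12}\,\|v\|_X$.

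I expect the main obstacle to be the interface-element analysis: pinning down the geometry (the minority part of each interface element has measure $O(h^3)$, and the nodal values agree with those of the one-sided extension) and then squeezing out the sharp logarithmic dependence by optimizing the Sobolev exponent against the sliver volume; away from the interface the argument is entirely classical.
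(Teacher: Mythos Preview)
The paper does not prove this lemma; it is quoted directly from Chen and Zou \cite{chen-zou98}. Your outline is correct and reproduces their argument: classical Bramble--Hilbert on elements contained in a single subdomain, and on each interface element replacement of $v$ by a one-sided $H^2$-extension $\tilde v_1$ (whose nodal values agree with those of $v$ because the two edge vertices lie on $\Gamma$ and the third lies in $\Omega_1$), reducing the problem to the extension mismatch $g=\tilde v_2-\tilde v_1$ on a sliver of area $O(h^3)$. One small refinement worth noting: in \cite{chen-zou98} the H\"older/Sobolev step for $\nabla g$ is carried out globally on the union $S_h$ of all slivers (a tubular strip around $\Gamma$ of measure $O(h^2)$) rather than element by element; this sidesteps the need to track the $h$-dependence of the scaled embedding constant $H^1(K)\hookrightarrow L^p(K)$ on each individual triangle, but either route, after optimising $p\sim|\log h|$, delivers the factor $|\log h|^{1/2}$.
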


\subsection{Stability results for parabolic interface problems} \label{sec:2.2}
\indent
To discuss the solution of the (\ref{main:1}) - (\ref{main:3}) in the weak sense we consider the forward and backward in time parabolic interface  problems of the following form: For $g\in L^2(\Omega_T)$, let $\phi$ and $\psi$ be the solutions of 

\bea\label{ford:1}
\begin{cases}
\partial_t \phi(x,t) + \mathcal {L} \phi   =  g \;\;\; \mbox{in}\;\;\; \Omega_T \label{main-f:1}   \\
\phi(x,0) =  0 \;\;\;\mbox{in}\;\;\; \Omega;\;\;\;\;\;\;\phi =0 \;\;\;\mbox{on} \;\;\; \partial \Omega_T \label{main-f:2} \\
{[\phi]} = 0,\;\;\;\left[\beta \frac{\partial \phi}{\partial\textbf{n}}\right] = 0\;\;\;\mbox{across} \;\;\; \Gamma_T,\label{main-f:3}
\end{cases}
\eea

and
\bea\label{back:1}
\begin{cases}
 - \partial_t \psi(x,t) + \mathcal {L^*} \psi   =  g \;\;\; \mbox{in}\;\;\; \Omega_T \label{main-b:1}   \\
\psi(x,T) =  0 \;\;\;\mbox{in}\;\;\; \Omega;\;\;\;\;\;\;\psi =0 \;\;\;\mbox{on} \;\;\; \partial \Omega_T \label{main-b:2} \\
{[\psi]} = 0,\;\;\;\left[\beta \frac{\partial \psi}{\partial\textbf{n}}\right] = 0\;\;\;\mbox{across} \;\;\; \Gamma_T,\label{main-b:3}
\end{cases}
\eea
respectively, where, $\mathcal{L}^*$ denotes the  adjoint of the elliptic operator $\mathcal{L}$, given by
 \beno
 \mathcal{L}^* w = - \sum_{i,j =1}^2 \partial_{x_j}(\beta({x}) \partial x_i w).
 \eeno

Next, we present the stability results for parabolic interface problems (cf. \cite{chen-zou98, ladyzensk, sinha05}).

\begin{lemma}\label{lem:regu}
Let $g \in H^1(0,T; L^2(\Omega))$. Then the problems (\ref{ford:1}) and  (\ref{back:1}) have unique solution $v$ ($v = \phi$ or $v = \psi$) such that $v\in L^2(0, T; X) \cap H^1(0, T; L^2(\Omega))$. Moreover, $L^2(0, T; X) \cap H^1(0, T; L^2(\Omega)) \hookrightarrow \mathcal {C}([0, T]; H^1(\Omega))$ and $v$ satisfies the following a priori estimates
\be
\|v\|_{L^2(0, T; X)} + \|v_t\|_{L^2(0, T; L^2(\Omega))} \le c \|g\|_{L^2(0, T; L^2(\Omega))},  \label{stab:f}
\ee
and
\beno \label{stab:1}
\|\phi(T)\|_1 \le c \|g\|_{L^2(0, T; L^2(\Omega))}, \;\;\;\;\; \|\psi(0)\|_1 \le c \|g\|_{L^2(0, T; L^2(\Omega))}.
\eeno
\end{lemma}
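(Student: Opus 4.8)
The plan is to establish Lemma~\ref{lem:regu} by combining the known elliptic regularity theory for interface problems with the standard energy (Galerkin / semigroup) method for the associated abstract parabolic equation. Since the backward problem (\ref{back:1}) is transformed into a forward problem by the change of variable $t \mapsto T - t$ (and $\mathcal{L}^*$ has the same structure as $\mathcal{L}$ because $\beta$ is symmetric, indeed scalar), it suffices to treat the forward problem (\ref{ford:1}); the estimates for $\psi$ then follow verbatim. The key input is the stationary interface regularity result: for $\beta$ piecewise constant and $\Gamma$ of class $C^2$, the solution $w \in H_0^1(\Omega)$ of $\mathcal{L} w = F$ with $F \in L^2(\Omega)$ belongs to $X = H_0^1(\Omega) \cap H^2(\Omega_1) \cap H^2(\Omega_2)$ with $\|w\|_X \le c \|F\|$; this is the classical result of Ladyzhenskaya--Uraltseva and its refinements (cf.~\cite{ladyzensk, chen-zou98}), and I would cite it directly.

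First I would set up the weak formulation of (\ref{ford:1}): find $\phi \in \mathcal{X}(0,T)$ with $\phi(0) = 0$ such that $\langle \partial_t \phi, v\rangle + a(\phi, v) = (g, v)$ for all $v \in H_0^1(\Omega)$ and a.e.\ $t$. Existence and uniqueness of such a $\phi$, together with $\phi \in \mathcal{C}([0,T]; L^2(\Omega))$, is the Lions--Magenes / Galerkin theorem for coercive bilinear forms, using (\ref{conts})--(\ref{coercive}); I would state this and move on. Next, to upgrade regularity, I would differentiate the equation formally in time and test against $\partial_t \phi$. Testing the equation itself with $\partial_t \phi$ gives $\|\partial_t \phi\|^2 + \frac12 \frac{d}{dt} a(\phi,\phi) = (g, \partial_t \phi) \le \frac12 \|\partial_t\phi\|^2 + \frac12 \|g\|^2$, so after integration in time and using $\phi(0) = 0$ we obtain $\|\partial_t \phi\|_{L^2(0,T;L^2(\Omega))}^2 + \gamma \|\nabla \phi(T)\|^2 \le c \|g\|_{L^2(0,T;L^2(\Omega))}^2$ (here $g \in H^1(0,T;L^2(\Omega)) \hookrightarrow \mathcal{C}([0,T];L^2(\Omega))$ makes the right-hand side and all terms finite, and a further differentiated estimate, testing $\partial_t$ of the equation with $\partial_t \phi$, controls $\sup_t \|\partial_t \phi(t)\|$ if needed). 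This simultaneously yields the $H^1(0,T;L^2(\Omega))$ bound and the bound $\|\phi(T)\|_1 \le c\|g\|_{L^2(0,T;L^2(\Omega))}$. Finally, rewriting the PDE pointwise in time as the elliptic interface problem $\mathcal{L}\phi(t) = g(t) - \partial_t\phi(t) =: F(t) \in L^2(\Omega)$ and invoking the stationary regularity estimate gives $\|\phi(t)\|_X \le c\|F(t)\| \le c(\|g(t)\| + \|\partial_t\phi(t)\|)$; squaring and integrating over $(0,T)$ and using the bound just derived produces $\|\phi\|_{L^2(0,T;X)} \le c\|g\|_{L^2(0,T;L^2(\Omega))}$, which is (\ref{stab:f}). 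The embedding $L^2(0,T;X) \cap H^1(0,T;L^2(\Omega)) \hookrightarrow \mathcal{C}([0,T];H^1(\Omega))$ is the Lions--Magenes interpolation / Aubin--Lions embedding (with intermediate space $H^1(\Omega)$ between $X$ and $L^2(\Omega)$), which I would quote from \cite{Lions}.

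I expect the main obstacle to be justifying the energy estimate rigorously rather than formally: the test function $\partial_t \phi$ is only guaranteed to lie in $L^2(0,T;L^2(\Omega))$ a posteriori, so the differentiation and integration by parts in time must be performed on the Galerkin approximations $\phi_m \in V_m$ (finite-dimensional), where $\partial_t \phi_m$ is a legitimate test function, and the bounds then passed to the limit by weak lower semicontinuity. A secondary point requiring care is that the hypothesis $g \in H^1(0,T;L^2(\Omega))$ (rather than merely $L^2$) is what allows the differentiated estimate and hence the pointwise-in-time membership $\partial_t\phi(t) \in L^2(\Omega)$ for a.e.\ $t$ needed to invoke stationary elliptic regularity; I would make sure to use $g(0)$ and $\|g\|_{L^1(0,T;L^2(\Omega))} \le \|g\|_{H^1(0,T;L^2(\Omega))}$-type bounds cleanly. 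Everything else — the coercivity constants, Grönwall, the time-reversal for $\psi$ — is routine and I would not belabor it.
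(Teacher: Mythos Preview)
Your proof sketch is correct and follows the standard route: Galerkin existence, the energy identity obtained by testing with $\partial_t\phi$, stationary elliptic interface regularity applied pointwise in time, and the Lions--Magenes embedding. The justification via Galerkin approximations that you flag is exactly the right way to make the formal test rigorous.

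However, the paper does not actually prove this lemma. It is presented as a known stability result and is simply cited from the literature (Chen--Zou, Ladyzhenskaya et al., Sinha--Deka); no argument is given in the text. So there is nothing to compare at the level of proof strategy: you have supplied a self-contained argument where the paper defers entirely to references. One minor observation on your write-up: the stated estimates (\ref{stab:f}) and the terminal $H^1$ bounds follow already from $g\in L^2(0,T;L^2(\Omega))$ via the test with $\partial_t\phi$ at the Galerkin level; the stronger hypothesis $g\in H^1(0,T;L^2(\Omega))$ is not actually invoked in your core estimates (you correctly hedge with ``if needed'' on the differentiated identity), so you could streamline by dropping that digression.
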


\subsection{Existence and uniqueness of weak solution of interface problem with measure data}
In this section, we will discuss the existence and uniqueness of solutions of parabolic interface problem (\ref{main:1}) - (\ref{main:3})  with measure data in time.

The notion of the  solution of the problem (\ref{main:1}) - (\ref{main:3}) in the weak sense can be generalized by the well-known transposition method which in turn based on the solutions of a forward and backward in time parabolic problems (\cite{gong13, Lions}).
The next result presents the existence and uniqueness of the solution of the problem (\ref{main:1}) - (\ref{main:3}) by means of transposition technique.
\begin{lemma}
Let $f\in \mathcal {C}([0, T]; L^2(\Omega))$ and $\sigma \in \mathcal{M}[0, T]$. Then with the assumption that $v(x, T) =0$, the problem (\ref{main:1}) - (\ref{main:3}) has a unique solution $u\in L^2(0, T; H_0^1(\Omega)) \cap L^{\infty}(0, T; L^2(\Omega))$ such that
\be
-(u, \partial_t v)_{\Omega_T} + a_T(u, v) = \langle \mu, v\rangle_{\Omega_T} + (u_0, v(x, 0)) \quad \forall v\in \mathcal{X}(0, T), \label{weak}
\ee
where
\beno
 \langle\mu, v\rangle_{\Omega_T} = \int_{\bar{\Omega}_T} v d\mu = \int_0^T\left(\int_{\Omega} f(x, t) v(x, t) dx\right) d\sigma(t) \quad \forall v\in \mathcal {C}([0, T]; L^2(\Omega)).
\eeno
In addition, the following regularity result holds:
\be
\|u\|_{L^2(0, T; H_0^1(\Omega))} + \| u\|_{L^{\infty}(0, T; L^2(\Omega))} \le c \left(\|f\|_{L^{\infty}(0, T; L^2(\Omega))}  \| \sigma\|_{\mathcal{M}[0, T]} + \|u_0\|\right). \label{stab:m}
\ee
\end{lemma}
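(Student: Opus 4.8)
The plan is to realize $u$ as a bounded linear functional on a suitable test space and identify it with an element of $L^2(0,T;H_0^1(\Omega))\cap L^\infty(0,T;L^2(\Omega))$ via a duality/Riesz-type argument. First I would fix $g\in L^2(\Omega_T)$ and let $\psi=\psi(g)$ be the unique solution of the backward problem (\ref{back:1}); by Lemma \ref{lem:regu} (applied to the backward problem, noting $L^2(\Omega_T)$ data can be handled by density of $H^1(0,T;L^2(\Omega))$ together with the a priori bound (\ref{stab:f}), which passes to the limit), we have $\psi\in\mathcal{Y}(0,T)\hookrightarrow\mathcal{C}([0,T];H^1(\Omega))$ and in particular $\psi(\cdot,0)\in H^1(\Omega)$ with $\|\psi(0)\|_1\le c\|g\|_{L^2(\Omega_T)}$. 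Then I would define the linear functional
\beno
\Lambda(g):=\langle\mu,\psi(g)\rangle_{\Omega_T}+(u_0,\psi(g)(\cdot,0)).
\eeno
The key estimate is that $\Lambda$ is bounded on $L^2(\Omega_T)$: using the definition of $\langle\mu,\cdot\rangle_{\Omega_T}$,
\beno
|\langle\mu,\psi\rangle_{\Omega_T}|\le\int_0^T\Big(\int_\Omega |f(x,t)\,\psi(x,t)|\,dx\Big)\,d|\sigma|(t)\le \|f\|_{L^\infty(0,T;L^2(\Omega))}\,\|\sigma\|_{\mathcal{M}[0,T]}\,\sup_{t\in[0,T]}\|\psi(t)\|,
\eeno
and $\sup_t\|\psi(t)\|\le c\sup_t\|\psi(t)\|_1\le c\|g\|_{L^2(\Omega_T)}$ by the embedding $\mathcal{Y}(0,T)\hookrightarrow\mathcal{C}([0,T];H^1(\Omega))$ and (\ref{stab:f}); similarly $|(u_0,\psi(0))|\le\|u_0\|\,\|\psi(0)\|\le c\|u_0\|\,\|g\|_{L^2(\Omega_T)}$. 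Hence $|\Lambda(g)|\le c(\|f\|_{L^\infty(0,T;L^2(\Omega))}\|\sigma\|_{\mathcal{M}[0,T]}+\|u_0\|)\,\|g\|_{L^2(\Omega_T)}$, so by the Riesz representation theorem there is a unique $u\in L^2(\Omega_T)$ with $(u,g)_{\Omega_T}=\Lambda(g)$ for all $g\in L^2(\Omega_T)$, and $\|u\|_{L^2(\Omega_T)}$ obeys the same bound.

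Next I would show this $u$ satisfies the variational identity (\ref{weak}) for every $v\in\mathcal{X}(0,T)$ with $v(\cdot,T)=0$. For $v\in\mathcal{Y}(0,T)$ with $v(\cdot,T)=0$, set $g:=-\partial_t v+\mathcal{L}^* v\in L^2(\Omega_T)$; then $v=\psi(g)$ by uniqueness for (\ref{back:1}), so $(u,g)_{\Omega_T}=\langle\mu,v\rangle_{\Omega_T}+(u_0,v(\cdot,0))$, and integrating the left side by parts in time (the boundary term at $t=T$ vanishing) and in space gives exactly $-(u,\partial_t v)_{\Omega_T}+a_T(u,v)=\langle\mu,v\rangle_{\Omega_T}+(u_0,v(\cdot,0))$. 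A density argument then extends (\ref{weak}) to all of $\mathcal{X}(0,T)$. Uniqueness follows: if $u$ solves the homogeneous problem ($\mu=0$, $u_0=0$), then $(u,g)_{\Omega_T}=0$ for every $g\in L^2(\Omega_T)$ obtained as $-\partial_t\psi+\mathcal{L}^*\psi$ from some admissible $\psi$; since the solution map $g\mapsto\psi(g)$ is onto such $\psi$, $g$ ranges over all of $L^2(\Omega_T)$, forcing $u=0$.

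Finally I would upgrade the regularity to $u\in L^2(0,T;H_0^1(\Omega))\cap L^\infty(0,T;L^2(\Omega))$ with the bound (\ref{stab:m}). Here one tests (\ref{weak}) with well-chosen $v$: choosing $v$ to solve a backward problem with source in $L^2(0,T;H^{-1}(\Omega))$ (respectively a terminal-time type source) and invoking the stability estimates of Lemma \ref{lem:regu} controls $\|u\|_{L^2(0,T;H_0^1)}$ and $\|u\|_{L^\infty(0,T;L^2)}$ by the right-hand side of (\ref{stab:m}); alternatively one obtains these from the $L^2(\Omega_T)$ bound on $u$ together with an interpolation/bootstrap using the equation. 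The main obstacle is precisely this last step — tracking the correct dual norms so that the measure $\sigma$ only ever pairs against a quantity controlled in $\mathcal{C}([0,T];L^2(\Omega))$ (never in a space of lower regularity in time), and making sure the time-endpoint term $v(\cdot,0)$ is handled by the embedding $\mathcal{X}(0,T)\hookrightarrow\mathcal{C}([0,T];L^2(\Omega))$ rather than something stronger; the spatial interface structure enters only through Lemma \ref{lem:regu} and causes no additional difficulty.
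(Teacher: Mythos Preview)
The paper does not actually give a proof of this lemma: its entire argument is the single sentence ``Following the argument of \cite{gong13}, proof of the lemma follows.'' Your proposal is therefore far more detailed than what appears in the paper, and the strategy you outline --- the transposition method via the backward problem (\ref{back:1}) --- is exactly the method the paper names in the paragraph preceding the lemma. In that sense your approach matches the paper's.

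That said, there is a logical ordering issue in your sketch worth flagging. When you write, for $v\in\mathcal{Y}(0,T)$, that $(u,-\partial_t v+\mathcal{L}^*v)_{\Omega_T}$ equals $-(u,\partial_t v)_{\Omega_T}+a_T(u,v)$ ``by integrating by parts in space,'' this step already requires $u\in L^2(0,T;H_0^1(\Omega))$ to move the spatial derivatives onto $u$; at that stage you only have $u\in L^2(\Omega_T)$ from Riesz. Likewise, the very form (\ref{weak}) with test functions $v\in\mathcal{X}(0,T)$ only makes sense once $u\in L^2(0,T;H_0^1(\Omega))$, so the regularity upgrade must come \emph{before} verifying (\ref{weak}), not after. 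Your final paragraph gestures at this, but the mechanism you propose (backward problems with $H^{-1}$ source) would need a stability estimate not provided by Lemma~\ref{lem:regu}. The standard route in \cite{gong13} is instead to approximate $\sigma$ by smooth measures $\sigma_k$, solve the resulting regular problems for $u_k$, derive uniform energy bounds in $L^2(0,T;H_0^1)\cap L^\infty(0,T;L^2)$ by testing with $u_k$ itself, and pass to the weak limit; the transposition identity then identifies the limit with your $u$. The interface structure enters only through Lemma~\ref{lem:regu}, as you correctly note.
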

Following the argument of \cite{gong13}, proof of the lemma follows.

%%%%%%%%%%%%%%%%%%%%%%%%%%%%%%%%%%%%%%%%%%%%%%%%%%%%%%%%%%%%%%%%%%%%%%%%%%%%%%%%
\section{A priori error analysis}\label{sec3}
%\subsection{Finite element approximation of parabolic interface problems with measure data in time}
 To discuss a priori error analysis, we first define spatially discrete approximation for the problem (\ref{main:1}) - (\ref{main:3}) which in turn depends on the weak formulation of the problem (\ref{weak}).

The spatially discrete approximation for the problem (\ref{main:1}) - (\ref{main:3}) reads as: Find $u_h \in {L^{2}(0, T; \mathbb{V}_h)}$ with $v_h(x, T) = 0$ such that
\be
-(u_h, \partial_t v_h)_{\Omega_T} + a_T(u_h, v_h) = \langle \mu, v_h\rangle_{\Omega_T} + (\pi_0 u_0, v_h(x, 0)) \quad \forall v_h\in H^1(0, T; \mathbb{V}_h), \label{semi}
\ee
where $\pi_0$ is a suitable chosen projection operator from $L^2(\Omega)$ into $\mathbb{V}_h$.

In the above,
\beno
 \langle\mu, v_h\rangle_{\Omega_T} = \int_{\bar{\Omega}_T} v_h d\mu = \int_0^T\left(\int_{\Omega} f(x, t) v_h(x, t) dx\right) d\sigma(t) \quad \forall v_h \in \mathcal {C}([0, T]; \mathbb{V}_h).
\eeno

\subsection{Some auxiliary results}
To derive a priori error bound,  we first introduce some projection operators.

\emph{$L^2$- projection operator}. The $L^2$- projection operator is the operator $L_h: L^2(\Omega)  \longrightarrow \mathbb{V}_h$ such that for $w\in L^2(\Omega)$,
\be
(L_h w, v_h) = (w, v_h) \quad \forall v_h\in \mathbb{V}_h. \label{L2:proj}
\ee

\emph{Ritz - projection operator}.  The Ritz - projection operator is the operator $R_h: H_0^1(\Omega)  \longrightarrow \mathbb{V}_h$ such that for $w\in H_0^1(\Omega)$,
\be
a(R_h w, v_h) = a(w, v_h) \quad \forall v_h\in \mathbb{V}_h. \label{Ritz:proj}
\ee

The following lemma gives the approximation results for both the $L^2$- projection  and  the Ritz - projection operator.

\begin{lemma}\label{lem: L2Ritz}
Let $L_h$ be the $L^2$- projection operator and $R_h$ be the Ritz - projection operator defined by (\ref{L2:proj}) and (\ref{Ritz:proj}), respectively. Then the  following approximation results hold:
\bea
\| w - L_h w\|_{-1} + h \| w - L_h w\| &\le& c h^2 \| w\|_1 \label{approx:L2} \\
\| w - R_h w\| + h |\log h|^{\frac 1 2}\| w - R_h w\|_1 &\le& c h^2 |\log h| \| w\|_X. \label{approx:Ritz}
\eea
\end{lemma}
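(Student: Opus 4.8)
The plan is to treat the two estimates separately, since they rely on different mechanisms. For \eqref{approx:L2}, I would use the standard duality (Aubin–Nitsche) machinery. The $L^2$-norm bound $\|w-L_h w\|\le ch\|w\|_1$ follows from the best-approximation property of $L_h$ in $L^2$ together with the Lagrange interpolation estimate of Lemma \ref{lem:lag}, applied with the observation that $w\in H_0^1(\Omega)$ need not be in $X$ — so I must be slightly careful. Actually the cleanest route: $\|w-L_hw\|\le \|w-\Pi_h w\|$ is false in general since $\Pi_h$ needs $\mathcal C(\bar\Omega)$; instead I would invoke the Clément/Scott–Zhang quasi-interpolant, or simply cite that $L_h$ satisfies $\|w-L_hw\|\le ch^s\|w\|_s$ for $0\le s\le 2$ on quasi-uniform meshes. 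For the $H^{-1}$ bound, given $\varphi\in H_0^1(\Omega)$, write $(w-L_hw,\varphi) = (w-L_hw,\varphi-L_h\varphi)\le \|w-L_hw\|\,\|\varphi-L_h\varphi\|\le ch\|w\|_1\cdot ch\|\varphi\|_1$ using the $L^2$ estimate twice (this is where the extra power of $h$ comes from), then divide by $\|\varphi\|_1$ and take the supremum. The quasi-uniformity assumption \textbf{A2} guarantees the needed inverse/approximation properties hold globally despite the interface.

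For \eqref{approx:Ritz}, the $H^1$-seminorm estimate is the starting point: by Galerkin orthogonality \eqref{Ritz:proj} and coercivity \eqref{coercive},
\beno
\gamma\|w-R_hw\|_1^2 \le a(w-R_hw,w-R_hw) = a(w-R_hw, w-\chi) \le \alpha\|w-R_hw\|_1\|w-\chi\|_1
\eeno
for any $\chi\in\mathbb V_h$; choosing $\chi=\Pi_h w$ and applying Lemma \ref{lem:lag} (valid since $w\in X$) gives $\|w-R_hw\|_1 \le c\,h|\log h|^{1/2}\|w\|_X$, which is the second term of \eqref{approx:Ritz} up to the factor $|\log h|^{-1/2}$ that appears on the left. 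Then the $L^2$ estimate comes from duality: let $z\in X$ solve the adjoint elliptic interface problem $\mathcal L^* z = w-R_hw$ in $\Omega$ with homogeneous Dirichlet data and the jump conditions; by the elliptic interface regularity (the $X$-regularity underlying Lemma \ref{lem:regu}) one has $\|z\|_X\le c\|w-R_hw\|$. Then
\beno
\|w-R_hw\|^2 = a(w-R_hw, z) = a(w-R_hw, z-R_hz) = a(w-R_hw, z-\Pi_h z) \le \alpha\|w-R_hw\|_1\,\|z-\Pi_h z\|_1,
\eeno
and applying Lemma \ref{lem:lag} to both factors yields $\|w-R_hw\|^2 \le c\,h|\log h|^{1/2}\|w\|_X \cdot h|\log h|^{1/2}\|z\|_X \le c\,h^2|\log h|\,\|w\|_X\,\|w-R_hw\|$, so $\|w-R_hw\|\le c\,h^2|\log h|\,\|w\|_X$.

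The main obstacle, and the point that deserves the most care, is the appearance of the $|\log h|^{1/2}$ factors and their correct bookkeeping: the nonstandard Lagrange estimate of Lemma \ref{lem:lag} loses a logarithmic factor precisely because the Ritz projection of an interface function lives only in $H^1$ globally, and one must verify that the dual solution $z$ genuinely enjoys the $X$-regularity with a constant independent of $h$ — this is exactly where the $C^2$ smoothness of the interface $\Gamma$ and the convexity of $\Omega$ enter. A secondary subtlety is justifying $a(w-R_hw, R_h z)=0$, which is just \eqref{Ritz:proj} read with the roles of the arguments; since $a(\cdot,\cdot)$ here is symmetric (it is $\int_\Omega \beta\nabla v\cdot\nabla w$), Galerkin orthogonality applies on either slot. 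No issue of $\mathcal C(\bar\Omega)$-membership arises for $z$ or $w$ since both are in $X\hookrightarrow \mathcal C(\bar\Omega)$ in two dimensions.
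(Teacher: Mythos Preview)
Your proposal is correct and matches the paper's approach: the paper cites references for \eqref{approx:L2} and, for \eqref{approx:Ritz}, establishes the $H^1$-bound via the Lagrange interpolant (using $H^1$-stability of $R_h$ and a triangle inequality, where you use the equivalent C\'ea argument) and then runs the same Aubin--Nitsche duality with the elliptic interface regularity $\|z\|_X\le c\|w-R_hw\|$ to obtain the $L^2$-bound. One small wording slip: in the final chain you should apply the already-proved $H^1$ Ritz estimate to $\|w-R_hw\|_1$ and Lemma~\ref{lem:lag} only to $\|z-\Pi_hz\|_1$, not ``Lemma~\ref{lem:lag} to both factors.''
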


\begin{proof}
We refer to \cite{gong13, rannacher82} for the  proof of the estimate (\ref{approx:L2}). The proof of  estimate (\ref{approx:Ritz}) uses the standard  trick. But, for clarity of presentation we provide here the brief explanation.

It is well-known that the Ritz - projection is stable in the $H^1$-norm (see, \cite{rannacher82}), that is
\be
\|R_h w\|_1 \le c \|w\|_1. \label{ritz:stabi}
\ee
Invoke triangle inequality and  note that $R_h (\Pi_h w) = \Pi_h w$,      we obtain
\bea
\| w - R_h w\|_1 &\le& \| w  - \Pi_h w\|_1 + \|R_h w  - \Pi_h w\|_1  \nonumber\\
                  &\le& \| w  - \Pi_h w\|_1 + c \| w  - \Pi_h w\|_1,
\eea
where we have used equation (\ref{ritz:stabi}). The proof of the estimate follows from  the Lemma \ref{lem:lag}.

To obtain the $L^2$-norm estimate, we use the usual duality trick. For any $w\in X$, let $v\in H_0^1(\Omega)$ be the  unique solution of the following elliptic interface problem
\bea
a(v, z ) &=& \left( w- R_h w, z\right) \;\;\forall z \in H^{1}_{0}(\Omega),  \label{est:R:1} \\
v  & = & 0 \;\; \text{on}\;\; \partial\Omega, \nonumber\\
  \left[v \right] &= &  0,  \;\; \left[\beta \frac{\partial v}{\partial \textbf{n}}\right] =0 \;\; \text{across}\;\;  \Gamma \nonumber.
\eea
Further, the solution $v$ satisfies  the following regularity result (cf. \cite{chen-zou98})
\be
\|v\|_{X} \leq c\;  \|w- R_h w\|.           \label{er-2:cn:2}
\ee
Equations (\ref{est:R:1}) and (\ref{er-2:cn:2}),  together with (\ref{Ritz:proj}), leads to
\beano
\|w- R_h w\|^2  & = & a(v - R_h v, w - R_h w) + a(R_h v, w - R_h w) \nonumber\\
                      &\le& c \|v - R_h v\|_1 \|w - R_h w\|_1\nonumber\\
                      & \le & c h |\log h|^{\frac 1 2} \|v\|_{X} \|w - R_h w\|_1 \nonumber\\
                       & \le & c h |\log h|^{\frac 1 2} \|w- R_h w\|    \|w - R_h w\|_1,  \label{er-2:cn:3}\eeano

 Invoking the Lemma \ref{lem: L2Ritz},  the desired estimate follows and this completes  the proof.
\end{proof}

Now, we define spatially discrete finite element approximation for the backward parabolic interface problem (\ref{back:1}) which is required in our subsequent analysis.

In order to define this, we first write the weak formulation of the interface problem (\ref{back:1}) as: Seek $\psi \in {H^{1}(0, T; H_0^1(\Omega))}$  such that

\be
-(\partial_t \psi, v )_{\Omega_T} + a_T{(\psi, v)} = (g, v)_{\Omega_T} \quad \forall v\in H^1(0, T; H_0^1(\Omega)), \label{weak:back}
\ee
with $\psi (\cdot, T) =0$.

Then the spatially discrete finite element approximation to (\ref{back:1}) is stated as follows: Find $\psi_h \in {H^{1}(0, T; \mathbb{V}_h)}$ such that
\be \label{semi:back}
-(\partial_t \psi_h, v_h)_{\Omega_T} + a_T{(\psi_h, v_h)} = (g, v_h)_{\Omega_T} \quad \forall v_h\in H^1(0, T; \mathbb{V}_h)
\ee
with $\psi_h (\cdot, T) =0$.

The next lemma presents a priori error bound for the backward parabolic interface problem (\ref{back:1}).
\begin{lemma}\label{lem:apri:1}
Let $\psi \in \mathcal{Y}(0, T)\hookrightarrow \mathcal{C}([0, T]; H_0^1(\Omega))$ and $\psi_h$ be the solutions of the problem (\ref{back:1}) and (\ref{semi:back}), respectively. If $\psi_h(0) = L_h \psi(0)$, then we have
\be\label{back:apriori}
\| \psi(t)  -  \psi_h(t) \|^2 + \int_0^t \|\psi(s)  -  \psi_h(s)\|_1^2 ds
 \le c h^2 \left( \|\partial_t \psi\|^2_{L^{2}(0, T; L^{2}(\Omega))} + |\log h| \|\psi\|^2_{L^{2}(0, T; X)}  \right).
\ee
\end{lemma}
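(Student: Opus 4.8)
The plan is to use the standard parabolic error-splitting argument adapted to the low-regularity interface setting, splitting the error through the $L^2$-projection rather than the Ritz projection. First I would write $\psi(t) - \psi_h(t) = \left(\psi(t) - L_h\psi(t)\right) + \left(L_h\psi(t) - \psi_h(t)\right) =: \rho(t) + \theta(t)$, where $\rho$ is controlled directly by the $L^2$-projection estimate \eqref{approx:L2} (both the $L^2$- and $H^1$-parts, the latter via the inverse inequality from quasi-uniformity together with \eqref{approx:L2}, or by an $H^1$-stability bound on $L_h$), and $\theta \in H^1(0,T;\mathbb{V}_h)$ is the finite element quantity we must estimate. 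Subtracting the discrete equation \eqref{semi:back} from the weak form \eqref{weak:back} tested against $v_h$, and using the defining property of $L_h$ to rewrite $(\partial_t\rho, v_h) = 0$ (since $\partial_t L_h\psi = L_h\partial_t\psi$), I obtain the error equation for $\theta$: for all $v_h \in \mathbb{V}_h$ (pointwise in $t$ after localizing in time),
\beno
-(\partial_t\theta, v_h) + a(\theta, v_h) = a(\rho, v_h).
\eeno
The right-hand side is where the interface regularity enters: $a(\rho, v_h) = a(\psi - L_h\psi, v_h)$, which we cannot bound by $\|\rho\|\,\|v_h\|$, only by $\|\rho\|_1\|v_h\|_1$, and $\|\rho\|_1 \le c h|\log h|^{1/2}\|\psi\|_X$ by \eqref{approx:L2} plus the inverse estimate (or more sharply, this is exactly the kind of estimate \eqref{approx:Ritz} handles if one routes through $R_h$; I would compare both routings and pick whichever keeps the stated $|\log h|$ power). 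This is the main obstacle: controlling the consistency term without the $H^2$ global regularity that the non-interface theory of \cite{gong13} relies on.

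Next I would run the energy estimate backward in time (the equation is backward parabolic with $\theta(T) = 0$ if $\psi_h(T) = L_h\psi(T) = 0$; note the hypothesis actually fixes $\psi_h(0) = L_h\psi(0)$, so I would instead integrate from $t$ forward — let me keep it flexible and just test with $v_h = \theta$ and integrate over the appropriate subinterval). Testing with $\theta$ gives
\beno
\frac{1}{2}\frac{d}{dt}\|\theta(t)\|^2 - a(\theta,\theta) = -a(\rho,\theta),
\eeno
so after integrating, using coercivity \eqref{coercive} on the $a(\theta,\theta)$ term to produce $\gamma\int\|\theta\|_1^2$, and bounding $|a(\rho,\theta)| \le \alpha\|\rho\|_1\|\theta\|_1 \le \frac{\gamma}{2}\|\theta\|_1^2 + \frac{\alpha^2}{2\gamma}\|\rho\|_1^2$ via \eqref{conts} and Young's inequality, I absorb the $\|\theta\|_1^2$ term and arrive at
\beno
\|\theta(t)\|^2 + \int_0^t\|\theta(s)\|_1^2\,ds \le c\int_0^t\|\rho(s)\|_1^2\,ds \le c h^2|\log h|\,\|\psi\|_{L^2(0,T;X)}^2,
\eeno
using the boundary term $\|\theta\|^2$ at the fixed endpoint vanishing. (If one must route the initial/terminal data mismatch through a Gronwall step, I would insert that, but with $\psi_h(0)=L_h\psi(0)$ the endpoint term is zero and no exponential factor appears.)

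Finally I would reassemble via the triangle inequality: $\|\psi(t)-\psi_h(t)\|^2 \le 2\|\rho(t)\|^2 + 2\|\theta(t)\|^2$ and $\int_0^t\|\psi-\psi_h\|_1^2 \le 2\int_0^t\|\rho\|_1^2 + 2\int_0^t\|\theta\|_1^2$. The $\rho$ contributions give $\|\rho(t)\|^2 \le ch^4\|\psi(t)\|_1^2$ (bounded by $\|\psi\|_{L^2(0,T;X)}^2$ up to the embedding $\mathcal{Y}(0,T)\hookrightarrow \mathcal{C}([0,T];H_0^1)$, or more carefully one keeps it as a term and notes $h^4 \le h^2$) and $\int_0^t\|\rho\|_1^2 \le ch^2|\log h|\,\|\psi\|_{L^2(0,T;X)}^2$. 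The $\partial_t\psi$-term in the stated bound appears only if one needs it to control $\|\rho(t)\|$ or the endpoint data; I suspect it enters through bounding $\|L_h\psi(0) - \psi_h(0)\|$ or through an alternative splitting where $\partial_t\rho$ is not annihilated — I would check the exact source, but in the cleanest version the $\partial_t\psi$ term comes from estimating $\|\theta(t)\|^2$ when the energy identity is integrated and one keeps a $\int\|\partial_t\rho\|\,\|\theta\|$ term should the projection not commute with $\partial_t$ on the discrete space in the way needed; since $L_h$ does commute with $\partial_t$, I expect the $\partial_t\psi$ term is a deliberate over-estimate kept for uniformity with the forward-problem analysis, and I would state the sharper bound and then weaken it to \eqref{back:apriori}.
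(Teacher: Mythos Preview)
Your splitting $e=\rho+\theta$ through $L_h$ is, up to bookkeeping, exactly the paper's route: the paper works directly with $e=\psi-\psi_h$ but inserts $v_h=L_h\psi$ in the error equation, and because $(\rho,\theta)=0$ by the $L^2$-projection property the two calculations are algebraically identical. Your error equation for $\theta$, the test with $v_h=\theta$, the use of $\|w-L_hw\|_1\le c\|w-R_hw\|_1$ together with \eqref{approx:Ritz} to control $\int\|\rho\|_1^2$, and the absorption via Young's inequality all match the paper.

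The one genuine gap is your handling of the $\|\partial_t\psi\|_{L^2(0,T;L^2(\Omega))}$ term. It is \emph{not} a deliberate over-estimate, nor does it come from a failure of $(\partial_t\rho,v_h)=0$ (that orthogonality holds exactly, as you noted). It is required to bound the pointwise-in-$t$ term $\|\rho(t)\|^2=\|\psi(t)-L_h\psi(t)\|^2$. The paper does this by applying the embedding $\mathcal{X}(0,T)\hookrightarrow\mathcal{C}([0,T];L^2(\Omega))$ to $\rho$ itself (see \eqref{back:pf:5}), obtaining
\[
\max_{t\in[0,T]}\|\rho(t)\|\le c\Big(\|\rho\|_{L^2(0,T;H^1_0)}+\|\partial_t\rho\|_{L^2(0,T;H^{-1})}\Big),
\]
and then using $\|\partial_t\psi-L_h\partial_t\psi\|_{-1}\le ch\|\partial_t\psi\|$ (which follows from $(w-L_hw,v)=(w-L_hw,v-L_hv)$ and \eqref{approx:L2}). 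Your alternative route---applying $\mathcal{Y}(0,T)\hookrightarrow\mathcal{C}([0,T];H^1_0)$ to $\psi$ and then $\|\rho(t)\|\le ch\|\psi(t)\|_1$---also works, but note that the $\mathcal{Y}$-norm already contains $\|\partial_t\psi\|_{L^2(L^2)}$, so the term reappears there; you cannot bound $\sup_t\|\psi(t)\|_1$ by $\|\psi\|_{L^2(0,T;X)}$ alone. (Incidentally, $\|\rho(t)\|^2\le ch^2\|\psi(t)\|_1^2$, not $ch^4$.)
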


Since  we need to derive the error bound under the low regularity assumption stated in the above lemma,  we shall make use of the $L^2$ projection operator instead of the Ritz-projection operator which is in contrast to the usual a priori  error analysis for parabolic problems, see \cite{thomee06} for details. We will follow the approach of \cite{hou2002} to derive error bound   (\ref{back:apriori}).

\begin{proof}
Subtracting (\ref{semi:back}) from (\ref{weak:back}) we obtain
\be
-(\partial_t \psi(t)  - \partial_t \psi_h(t), v_h) + a(\psi(t) - \psi_h(t), v_h) =0 \quad \forall v_h\in \mathbb{V}_h, \quad \text{a.e.} \;t\in(0, T]. \label{back:pf:1}
\ee
Set $v_h(t) = L_h \psi(t)$ in (\ref{back:pf:1}) we have for a.e. $t\in(0, T]$,
\bea \label{back:pf:2}
\frac 1 2 \frac{d}{dt} \|\psi(t)  -  \psi_h(t)\|^2 &+& a (\psi(t)  -  \psi_h(t), \psi(t)  -  \psi_h(t)) \nonumber\\
                  &=& (\partial_t \psi(t)  -  \partial_t \psi_h(t), \psi(t) - v_h(t)) + a(\psi(t) - \psi_h(t), \psi(t) - v_h(t)) \nonumber\\
                  %&=& (\partial_t \psi(t)  -  \partial_t {\psi_h}(t), \psi(t) - L_h \psi(t)) + a(\psi(t) - \psi_h(t), \psi(t) - L_h \psi(t)) \nonumber\\
                  &=& (\partial_t \psi(t)  -  \partial_t  {L_h}{ \psi}(t), \psi(t) - L_h \psi(t)) +  \nonumber\\
                  &&  (\partial_t  L_h \psi(t) - \partial_t {\psi_h}(t), \psi(t) - L_h \psi(t)) + a(\psi(t) - \psi_h(t), \psi(t) - L_h \psi(t)) \nonumber\\
                  &=& (\partial_t \psi(t)  -  \partial_t  {L_h}{ \psi}(t), \psi(t) - L_h \psi(t)) + a(\psi(t) - \psi_h(t), \psi(t) - L_h \psi(t)) \nonumber\\
                  &=& \frac 1 2 \frac{d}{ dt} \|{ \psi}(t) - L_h \psi(t)\|^2  + a(\psi(t) - \psi_h(t), \psi(t) - L_h \psi(t)),
                  \eea
where we have used the definition (\ref{L2:proj}) and the fact that $\partial_t  ({L_h}{ \psi}(t) -  \psi_h(t))  \in\mathbb{V}_h$.

Integrate (\ref{back:pf:2}) from $0$ to $t$ and with an aid of the Cauchy-Schwarz inequality, a simple calculation implies
\beano \label{back:pf:3}
&& \| \psi(t)  -  \psi_h(t) \|^2 + \gamma \int_0^t \|\psi(s)  -  \psi_h(s)\|_1^2 ds \nonumber\\
     && \le c\left( \| \psi(t) - L_h \psi(t)\|^2 + \| \psi(0) - \psi_h(0)\|^2 - \|\psi(0) - L_h\psi(0)\|^2 + \int_0^t \|\psi(s) - L_h\psi(s)\|_1^2 ds \right).
\eeano
If $\psi_h(0) = L_h \psi(0)$ and using the fact that $\| w - L_h w\|_1 \le c \| w - R_h w\|_1$ $\forall w \in H_0^1(\Omega)$ (see, \cite{hou2002}), it follows that
\bea\label{back:pf:4}
 \| \psi(t)  -  \psi_h(t) \|^2 &+& \gamma \int_0^t \|\psi(s)  -  \psi_h(s)\|_1^2 ds \nonumber\\
      &\le& c\left( \| \psi(t) - L_h \psi(t)\|^2  + \int_0^t \|\psi(s) - R_h\psi(s)\|_1^2 ds \right)\nonumber\\
      & \le& c\left(\max_{t\in[0, T]}\| \psi(t) - L_h \psi(t)\|^2  + \int_0^t \|\psi(s) - R_h\psi(s)\|_1^2 ds \right).
\eea
Again,  in view of the embedding result $\mathcal{X}(0, T)\hookrightarrow \mathcal{C}([0, T]; L^2(\Omega))$ (cf. \cite{evans, hou2002}), we have
\be\label{back:pf:5}
\max_{t\in[0, T]}\|w(t)\| \le c\left( \| w\|_{L^2(0, T; H_0^1(\Omega))} +  \| \partial_t w\|_{L^2(0, T; H^{-1}(\Omega))}\right).
\ee
With an aid of Lemma \ref{lem: L2Ritz} together with (\ref{back:pf:4}) and (\ref{back:pf:5}) we obtain
\beno\label{back:pf:6}
\| \psi(t)  -  \psi_h(t) \|^2 + \gamma \int_0^t \|\psi(s)  -  \psi_h(s)\|_1^2 ds
 \le c h^2 \left( \|\partial_t \psi\|^2_{L^{2}(0, T; L^{2}(\Omega))} + |\log h| \|\psi\|^2_{L^{2}(0, T; X)} \right).
\eeno
\end{proof}

Indeed,  we have obtained the following result.
\begin{lemma}\label{lem:apri:2}
Let $\psi \in \mathcal{Y}(0, T)\hookrightarrow \mathcal{C}([0, T]; H_0^1(\Omega))$ and $\psi_h$ be the solutions of the problem (\ref{back:1}) and (\ref{semi:back}), respectively. If $\psi_h(0) = L_h \psi(0)$, then we have
\be\label{back:apri:1}
\|\psi  -  \psi_h \|_{L^{\infty}(0, T; L^{2}(\Omega))} \le c h \left( \|\partial_t \psi\|_{L^{2}(0, T; L^{2}(\Omega))} + |\log h|^{\frac 1 2} \|\psi\|_{L^{2}(0, T; X)} \right).
\ee
\end{lemma}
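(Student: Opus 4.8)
The plan is essentially to observe that Lemma \ref{lem:apri:2} is just a restatement of Lemma \ref{lem:apri:1} in terms of the $L^{\infty}(0,T;L^2(\Omega))$-norm, obtained by taking the square root and supremum over $t$. First I would start from the estimate \eqref{back:apriori} of Lemma \ref{lem:apri:1},
\beno
\| \psi(t)  -  \psi_h(t) \|^2 + \int_0^t \|\psi(s)  -  \psi_h(s)\|_1^2 ds
 \le c h^2 \left( \|\partial_t \psi\|^2_{L^{2}(0, T; L^{2}(\Omega))} + |\log h| \|\psi\|^2_{L^{2}(0, T; X)}  \right),
\eeno
which holds for every $t\in[0,T]$. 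Since the second term on the left is nonnegative, I can drop it and keep only $\| \psi(t) - \psi_h(t)\|^2$ on the left. The right-hand side is independent of $t$, so taking the supremum (or essential supremum) over $t\in[0,T]$ on the left gives
\beno
\sup_{t\in[0,T]}\| \psi(t)  -  \psi_h(t) \|^2 \le c h^2 \left( \|\partial_t \psi\|^2_{L^{2}(0, T; L^{2}(\Omega))} + |\log h| \|\psi\|^2_{L^{2}(0, T; X)}  \right),
\eeno
and by the definition of the $L^{\infty}(0,T;L^2(\Omega))$-norm the left side is exactly $\|\psi - \psi_h\|_{L^\infty(0,T;L^2(\Omega))}^2$.

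The only remaining step is to take square roots. Here I would use the elementary inequality $\sqrt{a+b}\le \sqrt a + \sqrt b$ for $a,b\ge 0$ (equivalently $(a+b)^{1/2}\le a^{1/2}+b^{1/2}$), applied with $a = ch^2\|\partial_t\psi\|^2_{L^2(0,T;L^2(\Omega))}$ and $b = ch^2|\log h|\,\|\psi\|^2_{L^2(0,T;X)}$, to pass from the squared bound to
\beno
\|\psi  -  \psi_h \|_{L^{\infty}(0, T; L^{2}(\Omega))} \le c^{1/2} h \left( \|\partial_t \psi\|_{L^{2}(0, T; L^{2}(\Omega))} + |\log h|^{\frac 1 2} \|\psi\|_{L^{2}(0, T; X)} \right),
\eeno
and then absorbing $c^{1/2}$ into the generic constant $c$ yields \eqref{back:apri:1} exactly. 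One should also note $h<1$ so that $|\log h|=-\log h>0$ and the quantities are well defined; this is implicit throughout the paper.

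Honestly, there is no real obstacle here: the lemma is a corollary of Lemma \ref{lem:apri:1}, and the proof is two lines (drop the nonnegative $H^1$ term, take sup in $t$, take square roots using subadditivity of $\sqrt{\cdot}$). The only thing to be careful about is bookkeeping of the generic constant $c$ — since $c$ is declared generic, $c^{1/2}$ is harmlessly rewritten as $c$ — and making sure the $|\log h|$ inside the square root in \eqref{back:apriori} correctly becomes $|\log h|^{1/2}$ in \eqref{back:apri:1}, which it does under the $\sqrt{a+b}\le\sqrt a+\sqrt b$ split. I would write the proof simply as: "The estimate follows immediately from Lemma \ref{lem:apri:1} by neglecting the second term on the left-hand side of \eqref{back:apriori}, taking the supremum over $t\in[0,T]$, and then taking square roots with the help of the inequality $\sqrt{a+b}\le\sqrt a+\sqrt b$."
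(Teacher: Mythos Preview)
Your proposal is correct and matches the paper's own treatment: the paper introduces Lemma~\ref{lem:apri:2} with the phrase ``Indeed, we have obtained the following result,'' giving no separate proof and regarding it as an immediate corollary of Lemma~\ref{lem:apri:1}. Your two-line argument (drop the nonnegative $H^1$ term, take the supremum in $t$, take square roots) is exactly what is intended.
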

Now we are ready to present the main theorem of this article which provides a priori error bound in the $L^2(L^2)$-norm for the parabolic interface problem of the form (\ref{main:1}) - (\ref{main:3}).
\begin{theorem}
Let $u$ and $u_h$ be the  solutions of the problem (\ref{weak}) and (\ref{semi}), respectively. Then, for $f\in \mathcal{C}([0, T]; L^2(\Omega))$, $\sigma \in\mathcal{M}[0, T]$ and $u_0\in L^2(\Omega)$,  we have the following estimate
\be\label{thm:main}
\| u - u_h\|_{L^2(0, T; L^2(\Omega))} \le c h \max\{1, |\log h|^{\frac 1 2}\}  \left\{ \|u_0\| + \|f\|_{L^{\infty}(0, T; L^2(\Omega))} \| \sigma\|_{\mathcal{M}[0, T]}\right\}.
\ee
\end{theorem}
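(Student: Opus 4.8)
The plan is to use the duality (transposition) argument that is standard for measure-data parabolic problems, but to feed in the new low-regularity a priori bound of Lemma \ref{lem:apri:2} in place of the usual $H^2$-based estimate. First I would fix an arbitrary $g\in L^2(0,T;L^2(\Omega)) = L^2(\Omega_T)$ and let $\psi\in\mathcal Y(0,T)$ solve the backward interface problem (\ref{back:1}) with right-hand side $g$, and let $\psi_h\in H^1(0,T;\mathbb V_h)$ be its spatially discrete approximation (\ref{semi:back}) with $\psi_h(0)=L_h\psi(0)$. By the stability estimate (\ref{stab:f}) of Lemma \ref{lem:regu}, $\|\partial_t\psi\|_{L^2(0,T;L^2(\Omega))}+\|\psi\|_{L^2(0,T;X)}\le c\|g\|_{L^2(\Omega_T)}$, so Lemma \ref{lem:apri:2} gives
\be
\|\psi-\psi_h\|_{L^\infty(0,T;L^2(\Omega))}\le c\,h\max\{1,|\log h|^{\frac12}\}\,\|g\|_{L^2(\Omega_T)}.\nonumber
\ee
Then, since $\|u-u_h\|_{L^2(0,T;L^2(\Omega))}=\sup\{ (u-u_h,g)_{\Omega_T} : \|g\|_{L^2(\Omega_T)}\le 1\}$, it suffices to estimate $(u-u_h,g)_{\Omega_T}$.

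The second step is to rewrite $(u-u_h,g)_{\Omega_T}$ using $\psi$ and $\psi_h$ as test functions. Taking $g$ as the right-hand side in (\ref{back:1}), testing the weak forms appropriately, $(u,g)_{\Omega_T}=(u,-\partial_t\psi)_{\Omega_T}+a_T(u,\psi)$; since $\psi\in\mathcal Y(0,T)\subset\mathcal X(0,T)$ with $\psi(T)=0$, the weak formulation (\ref{weak}) of $u$ applies with $v=\psi$, yielding $(u,g)_{\Omega_T}=\langle\mu,\psi\rangle_{\Omega_T}+(u_0,\psi(0))$. Likewise, choosing $v_h=\psi_h$ in the discrete equation (\ref{semi}) — noting $\psi_h\in H^1(0,T;\mathbb V_h)$ and $\psi_h(T)=0$ — and using $\psi_h$ as a (discrete) test function against the discrete backward problem to identify $(u_h,g)_{\Omega_T}$, one gets $(u_h,g)_{\Omega_T}=\langle\mu,\psi_h\rangle_{\Omega_T}+(\pi_0 u_0,\psi_h(0))$. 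Subtracting,
\be
(u-u_h,g)_{\Omega_T}=\langle\mu,\psi-\psi_h\rangle_{\Omega_T}+(u_0,\psi(0))-(\pi_0 u_0,\psi_h(0)).\nonumber
\ee
For the last two terms, choosing $\pi_0=L_h$ and recalling $\psi_h(0)=L_h\psi(0)$, self-adjointness of $L_h$ gives $(L_h u_0,\psi_h(0))=(L_h u_0,L_h\psi(0))=(u_0,L_h\psi(0))=(u_0,\psi_h(0))$, hence $(u_0,\psi(0))-(\pi_0 u_0,\psi_h(0))=(u_0,\psi(0)-\psi_h(0))$, which is bounded by $\|u_0\|\,\|\psi-\psi_h\|_{L^\infty(0,T;L^2(\Omega))}$.

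The third step is to bound the measure term: by the definition of $\langle\mu,\cdot\rangle_{\Omega_T}$,
\be
|\langle\mu,\psi-\psi_h\rangle_{\Omega_T}|=\Bigl|\int_0^T\!\!\int_\Omega f(x,t)\bigl(\psi-\psi_h\bigr)(x,t)\,dx\,d\sigma(t)\Bigr|\le\|f\|_{L^\infty(0,T;L^2(\Omega))}\,\|\sigma\|_{\mathcal M[0,T]}\,\|\psi-\psi_h\|_{L^\infty(0,T;L^2(\Omega))},\nonumber
\ee
using Cauchy–Schwarz in space and the operator-norm characterization of $\|\sigma\|_{\mathcal M[0,T]}$ applied to the scalar function $t\mapsto\int_\Omega f(\psi-\psi_h)\,dx$, which is continuous on $[0,T]$ because $\psi-\psi_h\in\mathcal C([0,T];L^2(\Omega))$ and $f\in\mathcal C([0,T];L^2(\Omega))$. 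Combining the three steps with the bound on $\|\psi-\psi_h\|_{L^\infty(0,T;L^2(\Omega))}$ and taking the supremum over $\|g\|_{L^2(\Omega_T)}\le1$ yields (\ref{thm:main}).

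The main obstacle I anticipate is the rigorous justification of the identity $(u_h,g)_{\Omega_T}=\langle\mu,\psi_h\rangle_{\Omega_T}+(\pi_0 u_0,\psi_h(0))$: this requires that the discrete backward solution $\psi_h$ be an admissible test function in (\ref{semi}) and, conversely, that $u_h$ be admissible in (\ref{semi:back}) with $v_h$ replaced by a suitable discrete object — i.e. one must check the discrete transposition pairing is consistent, integrating by parts in time within $H^1(0,T;\mathbb V_h)$ and handling the boundary terms at $t=0$ and $t=T$ (where $\psi_h(T)=0$ and the $t=0$ term produces exactly $(\pi_0 u_0,\psi_h(0))$). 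The continuity in time needed to make sense of the measure pairing for the discrete error must also be tracked carefully, but this follows from $\mathbb V_h$ being finite-dimensional together with the regularity built into (\ref{semi}) and (\ref{semi:back}).
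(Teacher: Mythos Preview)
Your proposal is correct and follows essentially the same duality argument as the paper: fix $g\in L^2(\Omega_T)$, introduce the backward solution $\psi$ and its discrete counterpart $\psi_h$, use the weak formulations (\ref{weak}) and (\ref{semi}) with $\psi$ and $\psi_h$ as test functions to obtain $(u-u_h,g)_{\Omega_T}=\langle\mu,\psi-\psi_h\rangle_{\Omega_T}+(u_0,\psi(0)-\psi_h(0))$, and then apply Lemma~\ref{lem:apri:2} together with the stability bound (\ref{stab:f}). Your treatment is in fact slightly more careful than the paper's in two respects: you make the choice $\pi_0=L_h$ explicit and justify $(L_hu_0,\psi_h(0))=(u_0,\psi_h(0))$ via self-adjointness, and you flag the discrete transposition identity $(u_h,g)_{\Omega_T}=\langle\mu,\psi_h\rangle_{\Omega_T}+(\pi_0u_0,\psi_h(0))$ as the point requiring care, whereas the paper passes through it in the chain (\ref{pf:main:3}) without comment.
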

We follow the approach of Wei \cite{hou2002} to prove a priori error bound (\ref{thm:main}). The main idea in the proof is to employ duality  argument.
\begin{proof}
Subtracting (\ref{weak}) from (\ref{semi}) and using (\ref{L2:proj}), we obtain the orthogonality relation as
\be\label{pf:main:1}
((u - u_h), - \partial_t v_h)_{\Omega_T} + a_T(u - u_h, v_h) = 0 \quad \forall v_h\in H_1(0, T, \mathbb{V}_h).
\ee
In order to obtain an error bound of the form (\ref{thm:main}), we calculate the error $u - u_h$ in the $L^2(L^2)$-norm using the dual norm as
\be\label{pf:main:2}
\| u - u_h\|_{L^2(0, T; L^2(\Omega))} = \sup\left\{{{\frac{(g, u - u_h)_{\Omega_T}}{\|g\|_{L^2(0, T; L^2(\Omega))}}} \; :\; g\in L^2(0, T; L^2(\Omega)), \; g\neq 0}\right\}.
\ee
Therefore, for $g\in L^2(0, T; L^2(\Omega))$, equation (\ref{weak}), together with (\ref{semi}) and (\ref{pf:main:1}), leads to
\bea\label{pf:main:3}
(g, u - u_h)_{\Omega_T} &=& (-\partial_t \psi + \mathcal{L}^{*} \psi, u - u_h)_{\Omega_T} \nonumber\\
                      &=& (u, -\partial_t \psi)_{\Omega_T} + {a_T} (u,\psi)  + (u_h, \partial_t \psi)_{\Omega_T} -  {a_T} (u_h,\psi) \nonumber\\
                      &=& (u, -\partial_t \psi)_{\Omega_T} + {a_T} (u,\psi)  +  (u_h, \partial_t \psi_h)_{\Omega_T} -  {a_T} (u_h,\psi_h) \nonumber\\
                      &=& \langle \mu, \psi\rangle_{\Omega_T} + (u_0, \psi(x, 0)) - \langle \mu, \psi_h \rangle_{\Omega_T} - (L_h u_0, \psi_h(x, 0))\nonumber\\
                      &=& \langle \mu, \psi - \psi_h\rangle_{\Omega_T} + (u_0, \psi(0) - \psi_h(0))\nonumber\\
                      &=& \int_0^T\left(\int_{\Omega} f(x, t) (\psi - \psi_h) dx\right) d\sigma(t) + (u_0, \psi(0) - \psi_h(0))\nonumber\\
                      &\le& c \left\{\| f\|_{L^{\infty}(0, T; L^{2}(\Omega))} \|\sigma\|_{\mathcal{M}[0, T]}  + \|u_0\| \right\}\| \psi - \psi_h\|_{L^{\infty}(0, T; L^{2}(\Omega))}.
                      \eea
An application of Lemma \ref{lem:apri:2} together with Lemma \ref{lem:regu} leads to
\beano\label{pf:main:4}
(g, u - u_h)_{\Omega_T} &\le& c h \left(\| f\|_{L^{\infty}(0, T; L^{2}(\Omega))} \|\sigma\|_{\mathcal{M}[0, T]}  + \|u_0\| \right) \left( \| \partial_t \psi\|_{L^{2}(0, T; L^{2}(\Omega))} + |\log h|^{\frac 1 2} \| \psi\|_{L^{2}(0, T; X)}\right) \nonumber\\
                       &\le& c h \max\{1, |\log h|^{\frac 1 2}\}\left(\| f\|_{L^{\infty}(0, T; L^{2}(\Omega))} \|\sigma\|_{\mathcal{M}[0, T]}  + \|u_0\| \right) \|g\|_{L^{2}(0, T; L^{2}(\Omega))}.
\eeano
The rest of the proof follows from (\ref{pf:main:2}).
\end{proof}

\section{Conclusion and extension}
This article investigates a priori error analysis for parabolic interface problems with measure data in time in a bounded convex domain in $\mathbb{R}^2$. We have only considered spatially discrete approximation in the error analysis. It is interesting to extend these results for fully discrete approximations for parabolic interface problems with measure data in time. In addition, we will also extend this idea for parabolic interface problems with measure data in space. We remark that such an extension is not straightforward because of additional regularity issues in space  for interface problems. We will also address computational issues for such kind of problems in future.

%%%%%%%%%%%%%%%%%%%%%%%%%%%%%% bibliography starts here %%%%%%%%%%%%%%%%%%%%%%%%%
 \providecommand{\bysame}{\leavevmode\hbox
to3em{\hrulefill}\thinspace}

\bibliographystyle{siam}{\small
\bibliography{measure}}
\end{document}